\newcommand{\addresseshere}{%
  \enddoc@text\let\enddoc@text\relax
}
\newtheorem{assumption}{Assumption}
\newtheorem{prop}{Proposition}
\newtheorem{coro}{Corollary}
\newtheoremstyle{break}
  {9pt}
  {9pt}
  {}
  {}
  {\bfseries}
  {}
  {\newline}
  {}
\newcommand{\HG}{{[HG]}\xspace}
\newcommand{\midPV}{{mid-$p$-values}\xspace}
\newcommand{\Gunif}{{G_{\textnormal{unif}}}}
\newcommand{\indicator}{\mathbbm{1}}
\newcommand{\erw}{\textnormal{E}}
\newcommand{\FDR}{\textnormal{FDR}}
\newcommand{\FDP}{\textnormal{FDP}}
\newcommand{\FWER}{\textnormal{FWER}}
\newcommand{\kFDP}{\textnormal{kFDP}}
\newcommand{\kFDR}{\textnormal{kFDR}}
\begin{document}
\title[Discrete FDR step-up procedures]{A discrete modification of the Benjamini-Yekutieli procedure}
\author{Sebastian D\"ohler\\
Darmstadt University of Applied Sciences}
\address{Sebastian D\"ohler\\ University of Applied Sciences\\
CCSOR and Faculty of Mathematics and Science\\
D-64295 Darmstadt\\
Germany}
\email{sebastian.doehler@h-da.de}
\date{\today}
\begin{abstract}
The Benjamini-Yekutieli procedure is a multiple testing method that controls the  false discovery rate under arbitrary dependence of the $p$-values. A modification of this and related procedures is proposed for the case when the test statistics are discrete. It is shown that taking discreteness into account can improve upon known procedures. The performance of this new procedure is evaluated for pharmacovigilance data and in a simulation study.
\end{abstract}
\maketitle
\section{Introduction}
Consider the problem of testing $n$ hypotheses $H_1, \ldots, H_n$ simultaneously. A classical approach to dealing with the multiplicity problem is to control the familywise error rate (FWER), i.e. the probability of one or more false rejections. However, when the number $n$ of hypotheses is large, the ability to reject false hypotheses is small. Therefore, alternative Type 1 error rates have been proposed that relax control of FWER in order to reject more false hypothesis. One such errror rate is the expected proportion of false rejections amongst all rejections, which is known as the false discovery rate (FDR). \citet{BenjaminiHochberg95} showed that the so-called linear step-up procedure controls the $\FDR$ under independence, \citet{BenjaminiYekutieli01} proved that it also controls the FDR for certain types of positive dependence of the p-values. The latter publication also introduced a rescaled version of the linear step up procedure - referred to as the Benjamini-Yekutieli (BY) procedure hereafter - which guarantees FDR control under any type of dependence among the p-values. In this paper we focus on the Benjamini-Yekutieli procedure.

Generally, the only available information on the distribution of $p$-values is that it is stochastically larger than the uniform distribution. However, for discrete tests, there are important practical situations in which the (conditional) $p$-value distributions under the null hypotheses are in fact known. Examples of such tests include Fisher's exact test (see e.g. \citet{WestWolf1997} for an application to clinical studies), the McNemar test (see e.g. \citet{WestfTroendlePenello} for an application to classification models) and the binomial test (see e.g. \citet{DoergeChen2015} for an application to next generation sequencing data).  When the $p$-value distributions are available it may be possible to improve classical multiple testing procedures by incorporating discreteness of the distribution functions into the multiplicity adjustment while still controlling the Type 1 error rate. For the FWER, such methods were obtained e.g. by \citet{WestWolf1997} and \citet{HommKrumm1998}. For a review of such methods, see \citet{Gutman2007}. For the FDR, \citet{Heller2012}, in the following abbreviated as \HG, give an overview of existing approaches for taking discreteness into account. Moreover, they modify the Benjamini-Liu (BL) step-down procedure \citep{BenjaminiLiu1999} and prove FDR control for the case when the discrete test statistics are independent as well as for a certain type of dependency. Recently, \citet{DoergeChen2015} have used grouping and weighting approaches for independent test statistics. They point out that 'How to derive better FDR procedures in the discrete paradigm remains an urgent but still unresolved problem.' This paper is a contribution in this direction, for the case where the $p$-values are arbitrarily dependent.

In this paper we develop step-up procedures that control the FDR under arbitrary dependence of the $p$-values and improve upon known procedures when the test statistics are discrete. Our approach is similar in spirit to the one described by \citet{WestWolf1997}. The paper is organised as follows. First we present some theoretical results, building on the work of \citet{Sarkar2008}. Then we compare the performance with some of the procedures considered by \HG by analysing a pharmacovigilance data set provided by \HG in the R package \texttt{discreteMTP} and in a simulation study. The paper concludes with a discussion.

\section{Theoretical results} \label{sec:TheoreticalResults}
When testing hypotheses $H_1, \ldots, H_n$ we assume that corresponding $p$-values $PV_1, \ldots, PV_n \in [0,1]$ with distribution functions $F_1, \ldots, F_n$ under the null hypotheses are available. For $x \in [0,1]$ define the functions $G(x)= F_1(x) + \cdots + F_n(x)$ and $\Gunif(x)=n \cdot x$. Following \citet{LehmannRomano2005}, the only distributional assumption we make in this paper is the following.
\begin{assumption}
\label{ass:pvalues}
For any true hypothesis $H_i$ we assume that the distribution of the $p$-value $PV_i$ is stochastically larger than a uniform random variable, i.e. $F_i(u)\le u$ for all $u \in (0,1)$. 
\end{assumption}

Let $\mathcal{C}=\{c \in [0,1]^n | c_1 \le \cdots \le c_n\}$ denote the set of non-decreasing critical values. For $c \in \mathcal{C}$ the corresponding step-up procedure rejects hypotheses $H_{(1)}, \ldots, H_{(k)}$, where $k= \max \{i| PV_{(i)} \le c_i\}$. If no such $i$ exists, no hypothesis is rejected. For the corresponding step-down procedure, reject $H_{(1)}, \ldots, H_{(k)}$, where $k= \max \{i| PV_{(j)} \le c_j, \quad j=1, \ldots, i\}$. If $PV_{(1)}>c_1$, no hypothesis is rejected.

Our main tool for proving $\FDR$ control for discrete step-up procedures is an upper bound of the FDR in terms of the function $G$.
\begin{prop} \label{prop:MainProp}
Let $PV_1, \ldots, PV_n$ satisfy assumption \ref{ass:pvalues}. For any sequence $c \in \mathcal{C}$ we have for the step-up procedure (with $c_0=0$)
\begin{align}
\FDR(c) & \le  \sum_{r=1}^n \sum_{j \in I} \frac{1}{r} (F_j(c_r)-F_j(c_{r-1}))  \label{eq:SarkarBound1}\\
&\le \sum_{r=1}^n \frac{1}{r} (G(c_r)-G(c_{r-1})) \label{eq:SarkarBound2}
\end{align}
where $I \subset \{1, \ldots, n\}$ denotes the set of true null hypotheses.
\end{prop}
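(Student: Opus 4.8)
The plan is to bound the false discovery proportion pathwise and then take expectations. Write $R$ for the number of rejections of the step-up procedure with critical values $c\in\mathcal{C}$, and $V$ for the number of rejected true null hypotheses, so that $\FDR(c)=\erw[\FDP(c)]$ where $\FDP(c)=V/R$ on $\{R\ge 1\}$ and $\FDP(c)=0$ otherwise. The first step is the standard characterisation of the step-up procedure: on $\{R\ge 1\}$ the hypothesis $H_j$ is rejected if and only if $PV_j\le c_R$. Indeed, if $PV_j\le c_R$ but $PV_j$ were not among the $R$ smallest $p$-values, then some $PV_{(i)}$ with $i>R$ would satisfy $PV_{(i)}\le c_R\le c_i$, contradicting the maximality defining $R$. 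Hence
\begin{equation*}
\FDP(c)=\sum_{j\in I}\frac{\indicator\{PV_j\le c_R\}\,\indicator\{R\ge 1\}}{R},
\end{equation*}
each summand read as $0$ when $R=0$.

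Next I would establish, for each fixed $j\in I$, the almost-sure inequality
\begin{equation*}
\frac{\indicator\{PV_j\le c_R\}\,\indicator\{R\ge 1\}}{R}\;\le\;\sum_{r=1}^n\frac{1}{r}\,\indicator\{c_{r-1}<PV_j\le c_r\}.
\end{equation*}
On $\{R=0\}$ or on the event that $H_j$ is not rejected the left-hand side is $0$ and there is nothing to prove. On the event that $H_j$ is rejected we have $R=\rho\ge 1$ and $PV_j\le c_\rho$; if moreover $PV_j>0$, the index $r^{\ast}:=\min\{r:PV_j\le c_r\}$ is well defined, satisfies $1\le r^{\ast}\le\rho$ and $c_{r^{\ast}-1}<PV_j\le c_{r^{\ast}}$, so the right-hand side equals $1/r^{\ast}\ge 1/\rho$, which is the left-hand side. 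The only remaining case, that $H_j$ is rejected while $PV_j=0$, has probability zero: Assumption \ref{ass:pvalues} together with right-continuity of $F_j$ forces $F_j(0)=\lim_{u\downarrow 0}F_j(u)=0$, i.e. $\Prob(PV_j=0)=0$ for a true null. (An equivalent way to organise this comparison is via the telescoping identity $1/\rho=\sum_{r\ge\rho}1/(r(r+1))$.)

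Taking expectations in the pathwise inequality yields $\erw[\indicator\{PV_j\le c_R\}\indicator\{R\ge 1\}/R]\le\sum_{r=1}^n\frac1r(F_j(c_r)-F_j(c_{r-1}))$ for every $j\in I$; summing over $j\in I$ and interchanging the two finite sums gives \eqref{eq:SarkarBound1}. For \eqref{eq:SarkarBound2} it then suffices to observe that each $F_j$ is non-decreasing and $c_{r-1}\le c_r$, hence $F_j(c_r)-F_j(c_{r-1})\ge 0$ for every index $j$, so that $\sum_{j\in I}(F_j(c_r)-F_j(c_{r-1}))\le\sum_{j=1}^n(F_j(c_r)-F_j(c_{r-1}))=G(c_r)-G(c_{r-1})$; summing over $r$ finishes the proof. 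I expect the only genuinely delicate point to be the pathwise step: getting the rejection characterisation and the direction of the inequality $1/r^{\ast}\ge 1/\rho$ exactly right, and recognising that the degenerate case $PV_j=0$ is precisely what Assumption \ref{ass:pvalues} excludes; handling possible ties among the $p$-values is routine and does not affect the bound.
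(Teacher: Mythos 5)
Your proof is correct, but it takes a genuinely different route from the paper. The paper derives \eqref{eq:SarkarBound1} by invoking Lemma 3.3 of Sarkar (2007), which expresses an upper bound for the FDR of a step-up procedure in terms of the reduced rejection counts $R_{n-1}^{(-j)}$; it then bounds the conditional probability factor $P(R_{n-1}^{(-j)} \ge r-1 \mid PV_j)$ by $1$, bounds the indicator difference $\indicator_{\{PV_j \le c_r\}}/r - \indicator_{\{PV_j \le c_{r-1}\}}/(r-1)$ by $(\indicator_{\{PV_j \le c_r\}}-\indicator_{\{PV_j \le c_{r-1}\}})/r$, and takes expectations. You instead give a self-contained pathwise argument: the standard characterisation that a step-up procedure rejects exactly $\{j: PV_j \le c_R\}$ on $\{R\ge 1\}$, plus the elementary observation that rejection of $H_j$ forces $R \ge r^{\ast} = \min\{r: PV_j \le c_r\}$, hence $1/R \le 1/r^{\ast}$, which bounds $\FDP$ almost surely by $\sum_{j\in I}\sum_r \frac{1}{r}\indicator\{c_{r-1}<PV_j\le c_r\}$ before any expectation is taken. (This is in effect the original Benjamini--Yekutieli telescoping argument, carried out with the actual null distributions $F_j$ in place of the uniform bound.) Each approach has something to recommend it: yours is more elementary, needs no external lemma, and makes completely transparent that the only distributional input beyond monotonicity of the $F_j$ is $P(PV_j=0)=F_j(0)=0$ — exactly the weakening the paper points out in its comments; the paper's route keeps the factor $P(R_{n-1}^{(-j)} \ge r-1\mid PV_j)$ available at an intermediate stage, which is what one would retain to obtain sharper bounds under positive dependence in the style of Sarkar, although for this proposition it is simply discarded. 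Both arguments arrive at exactly the same bounds \eqref{eq:SarkarBound1} and \eqref{eq:SarkarBound2}, and your passage from \eqref{eq:SarkarBound1} to \eqref{eq:SarkarBound2} (adding the non-negative increments of the $F_j$ with $j\notin I$) is the same as the paper's.
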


\subsubsection*{Comments} If $PV_i\sim U(0,1)$ for $i \in I$, the bound \eqref{eq:SarkarBound1} yields the statement of Theorem 3.5 in \citet{Sarkar2008}. 

We have introduced assumption \ref{ass:pvalues} mainly because it is commonly considered a desirable property of $p$-values. However, as the following proof of the proposition shows, we actually only need that $F_j(0)=0$ and so this weaker assumption is sufficient for the proposition to hold. For discrete tests this is relevant because there are usually different ways of defining $p$-values, see \citet{Hirji06} for a comprehensive review of such possibilities. For instance, \HG consider in their analysis both exact and \midPV, which may not satisfy assumption \ref{ass:pvalues} but the weaker assumption $F_j(0)=0$. For \midPV this implies that the proposition remains true with corresponding function $G$ as long as $P(mid-PV_i=0)=0$. Thus the results presented here carry over naturally and flexibly to \midPV (and other variants) but since we are mainly interested in exact $p$-value methods we do not pursue this further.


\begin{proof}
From \citet{Sarkar2007}, Lemma 3.3, we obtain (with $k=1$ and $\mathcal{C}_1^0=I$)
\begin{align*}
\FDR &\le \sum_{j \in I} P(PV_j \le c_1) + \sum_{r=2}^n \sum_{j \in I} \erw \left[ P(R_{n-1}^{(-j)}) \ge r-1 | PV_j) \cdot \left( \frac{\indicator_{\{PV_j \le c_r\}}}{r} - \frac{\indicator_{\{PV_j \le c_{r-1}\}}}{r-1}\right)\right]
\intertext{where $R_{n-1}^{(-j)}$ is the number of rejections for the step-up procedure based on $\{PV_i | i \in I \setminus \{j\}\}$ and the critical values $c_2 \le \cdots \le c_n$. Since $P(R_{n-1}^{(-j)}) \ge r-1 | PV_j) \in [0,1]$ and }
&\frac{\indicator_{\{PV_j \le c_r\}}}{r} - \frac{\indicator_{\{PV_j \le c_{r-1}\}}}{r-1} \le  \frac{1}{r}(\indicator_{\{PV_j \le c_{r}\}}-\indicator_{\{PV_j \le c_{r-1}\}})
\intertext{we obtain by taking expectations}
\FDR &\le \sum_{j \in I} F_j(c_1) + \sum_{r=2}^n \sum_{j \in I} \frac{1}{r} (F_j(c_r)-F_j(c_{r-1})).
\end{align*}
Since $F_j(c_0)=0$ by assumption \ref{ass:pvalues}, the result follows.
\end{proof}
Under assumption \ref{ass:pvalues}, \citet{BenjaminiYekutieli01} showed that the step-up procedure (in the following abbreviated as BY) based on the critical constants 
\begin{align}
c_i &= \frac{i}{n\cdot D_{BY}} \alpha,\text{ where }  D_{BY}= \sum_{i=1}^n \frac{1}{i} \label{eq:DefBYProcedure}
\intertext{controls the FDR at level $\alpha$, while \citet{Sarkar2008b} obtained a similar result for}
c_i &= \frac{i(i+1)}{2n^2} \alpha. \label{eq:DefSarkarProcedure}
\end{align}

From the proposition we obtain a simple way of constructing step-up procedures that control the FDR. The following result is partly a generalisation of Theorem 3.5 of \citet{Sarkar2008}.
\begin{coro} \label{coro:SpecialProcedures}
Let $PV_1, \ldots, PV_n$ satisfy assumption \ref{ass:pvalues}. For any sequence $0=\widetilde{y}_0 \le \widetilde{y}_1 \le \cdots \le \widetilde{y}_n$ define 
\begin{align}
D &= \sum_{i=1}^n \frac{1}{i} (\widetilde{y}_i-\widetilde{y}_{i-1}) \nonumber 
\intertext{and for $\alpha \in (0,1)$ let $c \in \mathcal{C}$ satisfy}
G(c_i) &\le y_i:=\frac{\alpha}{D}\widetilde{y}_i \qquad \text{for $i=1, \ldots, n$.} \label{eq:DefCritConstResc}
\end{align}
\begin{itemize}
	\item[(a)] Then the step-up procedure with critical constants $c$ controls the FDR at level $\alpha$ under arbitrary dependence of the $p$-values.
\item[(b)] For the choice $\widetilde{y}_i=i$ we obtain a modified Benjamini-Yekutieli procedure.
\item[(c)] For the choice $\widetilde{y}_i=i(i+1)$ we obtain a modification of Sarkar's procedure.
\end{itemize}
\end{coro}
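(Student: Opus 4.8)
The plan is to obtain part (a) directly from the bound \eqref{eq:SarkarBound2} of Proposition \ref{prop:MainProp} by a summation-by-parts argument, and then to read off (b) and (c) by evaluating the constant $D$ and checking that the classical constants are admissible choices of $c$.

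For (a) I would start from
\begin{align*}
\FDR(c) \le \sum_{r=1}^n \frac{1}{r}\bigl(G(c_r)-G(c_{r-1})\bigr),
\end{align*}
using that $G(c_0)=G(0)=0$ (valid under Assumption \ref{ass:pvalues}, or even under the weaker condition $F_j(0)=0$ noted after the proposition). The key step is to reorganise this telescoping-type sum by Abel summation into
\begin{align*}
\sum_{r=1}^n \frac{1}{r}\bigl(G(c_r)-G(c_{r-1})\bigr) = \sum_{r=1}^{n-1} G(c_r)\Bigl(\frac{1}{r}-\frac{1}{r+1}\Bigr) + \frac{G(c_n)}{n}.
\end{align*}
Since $\tfrac1r-\tfrac1{r+1}>0$ and $\tfrac1n>0$, every coefficient on the right is nonnegative, so the hypothesis $G(c_r)\le y_r=(\alpha/D)\widetilde{y}_r$ can be inserted term by term. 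Applying the same Abel summation in reverse to the $\widetilde{y}_r$ (using $\widetilde{y}_0=0$) gives $\sum_{r=1}^{n}\frac{1}{r}(\widetilde{y}_r-\widetilde{y}_{r-1})=D$, so the right-hand side collapses to $(\alpha/D)\cdot D=\alpha$. I would also remark in passing that an admissible $c\in\mathcal{C}$ always exists, e.g. $c_i=\sup\{x\in[0,1]\mid G(x)\le y_i\}$, which is non-decreasing because $G$ is non-decreasing and $y_1\le\cdots\le y_n$; the point of the corollary is that this $c$ is typically strictly larger than the classical one.

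For (b), substituting $\widetilde{y}_i=i$ yields $D=\sum_{i=1}^n \frac{1}{i}=D_{BY}$ and the constraint $G(c_i)\le i\alpha/D_{BY}$. Since Assumption \ref{ass:pvalues} gives $G(x)\le\Gunif(x)=nx$, the original BY constants $c_i=i\alpha/(nD_{BY})$ of \eqref{eq:DefBYProcedure} satisfy $G(c_i)\le nc_i=i\alpha/D_{BY}$ and are hence admissible, so the constants delivered by the corollary are at least as large — justifying the name ``modified Benjamini-Yekutieli procedure''. For (c), $\widetilde{y}_i=i(i+1)$ telescopes to $D=\sum_{i=1}^n\frac{1}{i}\bigl(i(i+1)-(i-1)i\bigr)=\sum_{i=1}^n 2=2n$, and the same comparison with $G(x)\le nx$ shows that Sarkar's constants \eqref{eq:DefSarkarProcedure} are admissible, giving a (weakly) more powerful modification.

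The only genuinely delicate point — and the step I would be most careful to write out correctly — is the summation by parts: one cannot bound the increments $G(c_r)-G(c_{r-1})$ termwise, since only an \emph{upper} bound on $G(c_{r-1})$ is available, so the rearrangement into a sum carrying manifestly nonnegative weights on the $G(c_r)$ themselves is what makes the estimate go through. Everything else is routine bookkeeping.
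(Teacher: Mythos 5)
Your proof is correct and follows essentially the same route as the paper: the Abel summation rewriting $\sum_{r=1}^n \frac{1}{r}\bigl(G(c_r)-G(c_{r-1})\bigr)=\sum_{r=1}^{n-1}\frac{1}{r(r+1)}G(c_r)+\frac{1}{n}G(c_n)$ is exactly the manipulation used in the paper's proof of part (a), and your evaluations of $D$ for parts (b) and (c) match as well. Your added remarks on the existence and admissibility of $c$ are a harmless (and sensible) elaboration beyond what the paper states.
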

\begin{proof} For statement (a) we have by \eqref{eq:SarkarBound2}
\begin{align*}
\FDR(c) & \le \sum_{i = 1}^n  \frac{1}{i} (G(c_i)-G(c_{i-1}))= \sum_{i = 1}^{n-1}  \frac{1}{i(i+1)} G(c_i) + \frac{1}{n}G(c_n) \\
&\le \sum_{i = 1}^{n-1}  \frac{1}{i(i+1)} y_i + \frac{1}{n}y_n = \sum_{i = 1}^{n} \frac{1}{i} (y_i-y_{i-1})\\
&= \frac{\alpha}{D} \sum_{i = 1}^{n} \frac{1}{i}(\widetilde{y}_i-\widetilde{y}_{i-1})=\alpha
\end{align*}
where the second inequality follows from \eqref{eq:DefCritConstResc}.

For statement (b) we have 
\begin{align*}
D&= \sum_{i = 1}^n \frac{1}{i}(i-(i-1))=\sum_{i = 1}^n \frac{1}{i}= D_{BY}.
\end{align*}
If $G=\Gunif$ we recover the Benjamini-Yekutieli given by \eqref{eq:DefBYProcedure}.

For statement (c) we have 
\begin{align*}
D&= \sum_{i = 1}^n \frac{1}{i}(i(i+1)-(i-1)i)=2n
\end{align*}
which is the rescaling constant from Sarkar's procedure. If $G=\Gunif$ we recover Sarkar's constants from \eqref{eq:DefSarkarProcedure}.
\end{proof}
\subsubsection*{Comments} The corollary presents a generic method of constructing FDR controlling step-up procedures under general dependency. These procedures basically work by choosing values $y_1, \ldots,y_n$ and comparing these to the values of the function $G$ for the observed $p$-values and for arbitrary distribution of the $p$-values under the null hypotheses. More specifically, the step-up procedure rejects hypotheses $H_{(1)}, \ldots, H_{(k)}$, where $k= \max \{i| G(PV_{(i)}) \le y_i\}$. Adjusted $p$-values for the procedure defined by \eqref{eq:DefCritConstResc} are obtained using e.g. Procedure 1.4 from \citet{DudoitLaan2007}:
\begin{align}
\widetilde{pv}_{(i)} &= \min_{j=i, \ldots,n} \left\{ \min \left( \frac{D}{j}G(pv_{(j)}),1\right)\right\}. \label{eq:Def:Adjusted:pvalues}
\end{align}

When $PV_1, \ldots, PV_n \sim U(0,1)$ we have $G(x)=\Gunif(x)=n\cdot x$ and we obtain $c_i=\frac{1}{n}y_i$ which yields the Benjamini-Yekutieli constants for $\widetilde{y}_i=i$ and the procedure of \citet{Sarkar2008} for $\widetilde{y}_i=i(i+1)$. When the $p$-values are distributed discretely, the function $G(x)= F_1(x) + \cdots + F_n(x)$ may be considerably smaller than $\Gunif$, as explained in \HG. Thus, procedures based on $\Gunif$ rather than on $G$ may be substantially (and needlessly) more conservative. We present an example below. As \citet{WestWolf1997} point out in the context of FWER control, the degree of improvement 'depends upon the specific characteristics of the discrete distributions. Larger gains are possible when the number of tests is large and where many variables are sparse'.

For independent statistics, \citet{Heyse2011} introduces a modification of the BH procedure (DBH) by defining critical constants $c_i$ by $G(c_i)\le i \cdot \alpha$. This corresponds to our modified BY procedure where $D$ is set to 1. Since $D_{BY} = \log(m) + \gamma$ with $\gamma\approx 0.57721$ the Euler–-Mascheroni constant, this procedure is considerably more powerful than the discrete BY procedure. However, as we discuss in the appendix, the DBH procedure does not generally control the FDR at the desired level.

\subsubsection*{Example} Figure \ref{fig:CompareDiscGContG} presents the graphs of $G$ and $\Gunif$ for the pharmacovigilance data from \HG which is described in more detail in the next section. In this case, $n=2446$. For $x \in (0,0.001)$ the shape of $G$ is roughly linear and a numerical comparison yields $\frac{G}{\Gunif} \approx \frac{1}{3.8}$. This means that the classical step-up critical constants based on $\Gunif$ like the BY or Sarkar procedure can be multiplied by a factor of roughly $3.8$ and still maintain FDR control in this special discrete setting.
%
%

\begin{figure}[htbp]
	\centering
		\includegraphics[width=1.00\textwidth]{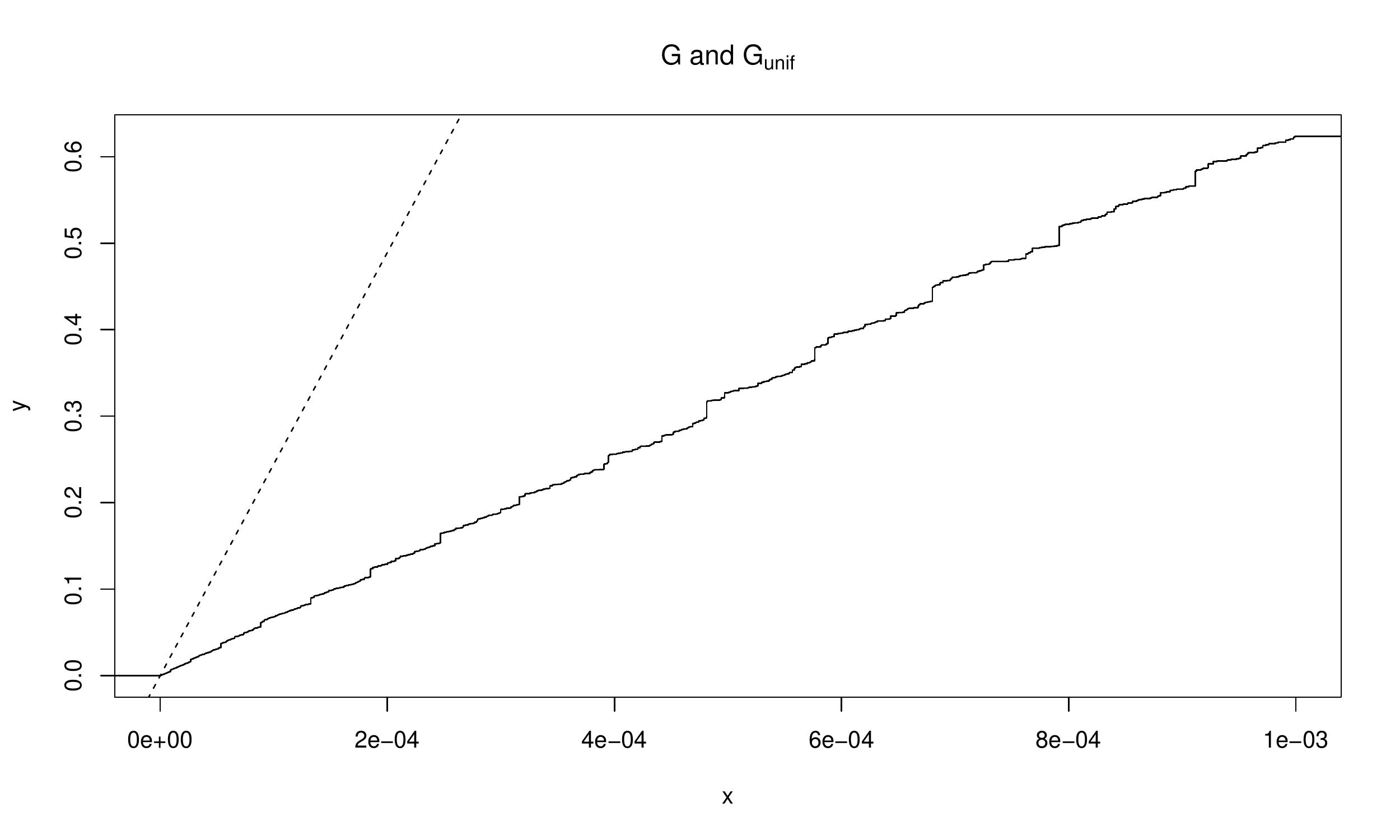}
	\caption{Graphs of $G$ (solid line) and $\Gunif$ (dotted line) for the pharmacovigilance data from \HG}
	\label{fig:CompareDiscGContG}
\end{figure} 
\section{Empirical data}
We revisit the pharmacovigilance data analysed by \HG, to which we also refer for more details. This data is derived from a database for reporting, investigating and monitoring adverse drug reactions due to the Medicines and Healthcare products Regulatory Agency in the United Kingdom. It contains the number of reported cases of amnesia as well as the total number of adverse events reported for each of the 2446 drugs in the database. Overall, there are 2051 cases of amnesia. \citet{Heller2012} investigate the association between reports of amnesia and suspected drugs by performing for each drug Fisher's exact test (one-sided) for testing for association between the drug and amnesia and adjusting for multiplicity by using several (discrete) FDR procedures.

The primary goal of our analysis is to compare the (continuous) Benjamini-Yekutieli and Sarkar procedures to the discrete modifications introduced in section \ref{sec:TheoreticalResults}. All these methods guarantee FDR control under arbitrary dependence of the $p$-values. As a secondary analysis we also present results  for the (continuous) Benjamini-Hochberg and Benjamini-Liu procedures as well as the discrete Benjamini-Liu procedure developed by \HG and the discrete BH procedure due to \citet{Heyse2011}.

Table \ref{tab-Example} presents the results of our analysis. For the BY and Sarkar procedures the adjusted $p$-values are reduced roughly by a factor of 3--4 by the discrete modifications as expected. Except for the Sarkar procedure all discrete modifications lead to some additional rejections. While the BH, BL and DBL procedures control FDR only for certain types of dependence, the results in the first four columns of the table are valid under any type of dependence.  The discrete BY not only rejects the same hypotheses as the discrete BL procedure, but, as a comparison of columns 2 and 8 shows, its adjusted $p$-values are only half as large. For instance, while the  adjusted $p$-value for association between Fluoxetine and amnesia is $12.4\%$ for the discrete BL, it barely fails to be significant at the $5\%$-level for the discrete BY. Thus, at least in this setting, the discrete BY procedure, while requiring less restrictive dependence assumptions, may be more powerful than the BL procedure. As expected, the DBH procedure rejects the most hypothesis, followed by the BH procedure. While the focus of this paper is on FDR procedures for arbitrarily dependent data, we would like to follow one of the reviewers who noted that for safety data, treating the data as independent may be preferable. Thus, it is not our intention of suggesting that the (discrete) BY procedure would be the ideal method for safety analyses. Rather, the rationale for analysing this specific data set was to facilitate a comparison to the work of HG.

\begin{table}[ht]
\caption{The 27 smallest adjusted $p$-values from the multiple
testing procedures for the pharmacovigilance data. The columns from left to right are the
adjusted $p$-values from the (continuous and discrete) Benjamini-Yekutieli procedure, the (continuous and discrete) Sarkar procedure, the (continuous and discrete) BH procedure and the (continuous and discrete) BL procedure.}\label{tab-Example}
\begin{center}
\scriptsize
\begin{tabular}{lcccccccc}
\toprule
 & BY &  DBY & Sarkar& DSarkar						& BH  		&  DBH					& BL & DBL\\ \midrule
	BUPROPION & 0.4161 & 0.1112 & 1.0000 & 1.0000 & 0.0497 & 0.0133 & 0.6864 & 0.2659 \\
    CITALOPRAM & 0.0080 & 0.0018 & 0.2902 & 0.0661 & 0.0009 & 0.0002 & 0.0140 & 0.0032 \\
    DEXAMPHETAMINE & 0.0606 & 0.0169 & 1.0000 & 0.4474 & 0.0072 & 0.0020 & 0.1386 & 0.0403 \\
    ETHANOL & 1.0000 & 0.2709 & 1.0000 & 1.0000 & 0.1263 & 0.0323 & 0.9515 & 0.5541 \\
    FLUOXETINE & 0.1956 & 0.0518 & 1.0000 & 1.0000 & 0.0233 & 0.0062 & 0.3956 & 0.1239 \\
    GABAPENTIN & 0.0000 & 0.0000 & 0.0000 & 0.0000 & 0.0000 & 0.0000 & 0.0000 & 0.0000 \\
    INDOMETHACIN & 0.0000 & 0.0000 & 0.0000 & 0.0000 & 0.0000 & 0.0000 & 0.0000 & 0.0000 \\
    LACOSAMIDE & 0.2752 & 0.0710 & 1.0000 & 1.0000 & 0.0328 & 0.0085 & 0.5222 & 0.1721 \\
    LEVETIRACETAM & 0.0446 & 0.0117 & 1.0000 & 0.3408 & 0.0053 & 0.0014 & 0.0948 & 0.0255 \\
    LITHIUM & 0.0012 & 0.0003 & 0.0464 & 0.0105 & 0.0001 & 0.0000 & 0.0020 & 0.0004 \\
    LORAZEPAM & 0.0000 & 0.0000 & 0.0000 & 0.0000 & 0.0000 & 0.0000 & 0.0000 & 0.0000 \\
    MEFLOQUINE & 0.0000 & 0.0000 & 0.0000 & 0.0000 & 0.0000 & 0.0000 & 0.0000 & 0.0000 \\
    MIDAZOLAM & 0.0189 & 0.0050 & 0.6480 & 0.1723 & 0.0023 & 0.0006 & 0.0350 & 0.0094 \\
    OXCARBAZEPINE & 1.0000 & 0.2912 & 1.0000 & 1.0000 & 0.1364 & 0.0348 & 0.9636 & 0.5931 \\
    PAROXETINE & 0.0000 & 0.0000 & 0.0000 & 0.0000 & 0.0000 & 0.0000 & 0.0000 & 0.0000 \\
    PREGABALIN & 0.0000 & 0.0000 & 0.0000 & 0.0000 & 0.0000 & 0.0000 & 0.0000 & 0.0000 \\
    RIMONABANT & 0.0000 & 0.0000 & 0.0000 & 0.0000 & 0.0000 & 0.0000 & 0.0000 & 0.0000 \\
    SERTRALINE & 0.5762 & 0.1547 & 1.0000 & 1.0000 & 0.0688 & 0.0185 & 0.8098 & 0.3599 \\
    SIMVASTATIN & 0.0000 & 0.0000 & 0.0000 & 0.0000 & 0.0000 & 0.0000 & 0.0000 & 0.0000 \\
    STRONTIUM\_RANELATE & 0.0550 & 0.0156 & 1.0000 & 0.4340 & 0.0066 & 0.0019 & 0.1211 & 0.0357 \\
    TEMAZEPAM & 0.0007 & 0.0002 & 0.0312 & 0.0072 & 0.0001 & 0.0000 & 0.0011 & 0.0003 \\
    TOPIRAMATE & 0.0380 & 0.0104 & 1.0000 & 0.3373 & 0.0045 & 0.0012 & 0.0732 & 0.0205 \\
    TRIAZOLAM & 0.0000 & 0.0000 & 0.0000 & 0.0000 & 0.0000 & 0.0000 & 0.0000 & 0.0000 \\
    VARENICLINE & 0.0000 & 0.0000 & 0.0000 & 0.0000 & 0.0000 & 0.0000 & 0.0000 & 0.0000 \\
    VIGABATRIN & 0.0418 & 0.0113 & 1.0000 & 0.3408 & 0.0050 & 0.0013 & 0.0846 & 0.0234 \\
    ZOLPIDEM & 0.0000 & 0.0000 & 0.0001 & 0.0000 & 0.0000 & 0.0000 & 0.0000 & 0.0000 \\
    ZOPICLONE & 0.0000 & 0.0000 & 0.0000 & 0.0000 & 0.0000 & 0.0000 & 0.0000 & 0.0000 \\
    \bottomrule
    
  Number rejections	&19 &  21 																			 &14							& 14													& 24					& 27	    &		16 & 21\\\bottomrule
\end{tabular}
\end{center}
\end{table}
\section{Simulation study}
We now investigate the power of the procedures from the previous section in a simulation study similar to those described in \cite{Gilbert05} and \cite{Heller2012}. Although our primary focus is on comparing the discrete Benjamini-Yekutieli procedure with its continuous counterpart, we also evaluate the performance of the BH procedure as a classical benchmark as well as the discrete BH and Benjamini-Liu procedures (we omit the continuous Benjamini-Liu procedure).


\subsection{Simulated Scenarios}

We simulate a two-sample problem in which a vector of $m$ independent binary responses (``adverse events") is observed for each subject in the two groups. In order to investigate the influence of discreteness, we consider two cases:
\begin{enumerate}
	\item[a)] High degree of discreteness: Each group consists of $N=25$ subjects.
	\item[b)] Moderate degree of discreteness: Each group consists of $N=100$ subjects.
\end{enumerate}
Then, the goal is to simultaneously test the $m$ null hypotheses $H_{0i}:p_{1i}=p_{2i}, i=1,\ldots,m$, where $p_{1i}$ and $p_{2i}$ are the success probabilities for the $i$th binary response in group 1 and 2 respectively. We take $m=800,2000$ where $m=m_{1}+m_{2}+m_{3}$ and data are generated so that the response is $Bernoulli(0.01)$ at $m_{1}$ positions for both groups, $Bernoulli(0.10)$ at $m_{2}$ positions for both groups and   $Bernoulli(0.10)$ at $m_{3}$ positions for group 1 and  $Bernoulli(q)$ at $m_{3}$ positions for group 2 where $q=0.15,0.25,0.4$ represents weak,  moderate and strong effects respectively. 
The null hypothesis is true for the  $m_{1}$ and $m_{2}$ positions while the alternative hypothesis is true for the  $m_{3}$ positions. We also take different configurations for the proportion of false null hypotheses, $m_{3}$ is set to be $10\%$,  $30\%$ and  $60\%$ of the value of $m$, which represents the complete null hypothesis and small, intermediate and large proportion of effects (the proportion of true nulls $\pi_{0}$ is $1$, $0.9$, $0.7$, $0.4$, respectively). Then, $m_{1}$ is set to be $20\%$,  $50\%$ and  $80\%$ of the true nulls ($m-m_{3}$) and $m_{2}=m-m_{1}-m_{3}$. 

For each of the 54 possible parameter configurations specified by $m,m_{3},m_{1}$ and $q_3$, $10000$ Monte Carlo trials are performed, that is, $10000$ data sets are generated and for each data set, an unadjusted two-sided p-value from Fisher's exact test is computed for each of the $m$ positions, and the multiple testing procedures mentioned above are applied at level $\alpha=0.05$. The power of each procedure was estimated as the fraction of the $m_{3}$ false null hypotheses that were rejected, averaged over the $10000$ simulations. For random number generation the R-function \textit{rbinom} was used. The two-sided p-values from Fisher's exact test were computed using the R-function \textit{fisher.test}.

\subsection{Simulation results}

\subsubsection{High degree of discreteness}
Table \ref{tab:Tab25} in the Appendix displays the (average) power of the seven procedures under investigation.
For weak and moderate effects, i.e. $q_3=0.15$ and $q_3=0.25$, none of the procedure possesses relevant power. For strong effects, the discrete procedures improve considerably on their standard counterparts. Both the discrete and the standard Sarkar procedure are outperformed by the other procedures. For fixed $m_3$ and $q_3$, the power of the discrete Benjamini-Yekutieli procedure is slightly increasing in $m_1$. This may seem surprising, since the $m_1$ positions represent true null hypotheses. The reason for this behavior is that because of the low success probability, these positions are accountable for the `most discrete' part of the data. The more discrete the data are, the flatter the function $G$ (see section \ref{sec:TheoreticalResults} and figure \ref{fig:CompareDiscGContG}) will tend to be, leading to higher critical values for the discrete procedure. 

For strong effects, the results are also displayed in figure \ref{fig:FDPvsModifiedFDPAverageProportionRejections_25}. In order to avoid over-optimism we used, as explained above, for fixed $m_3$ and $q_3$ the configuration with smallest $m_1$. 
\begin{figure}[htbp]
	\centering
		\includegraphics[width=1.00\textwidth]{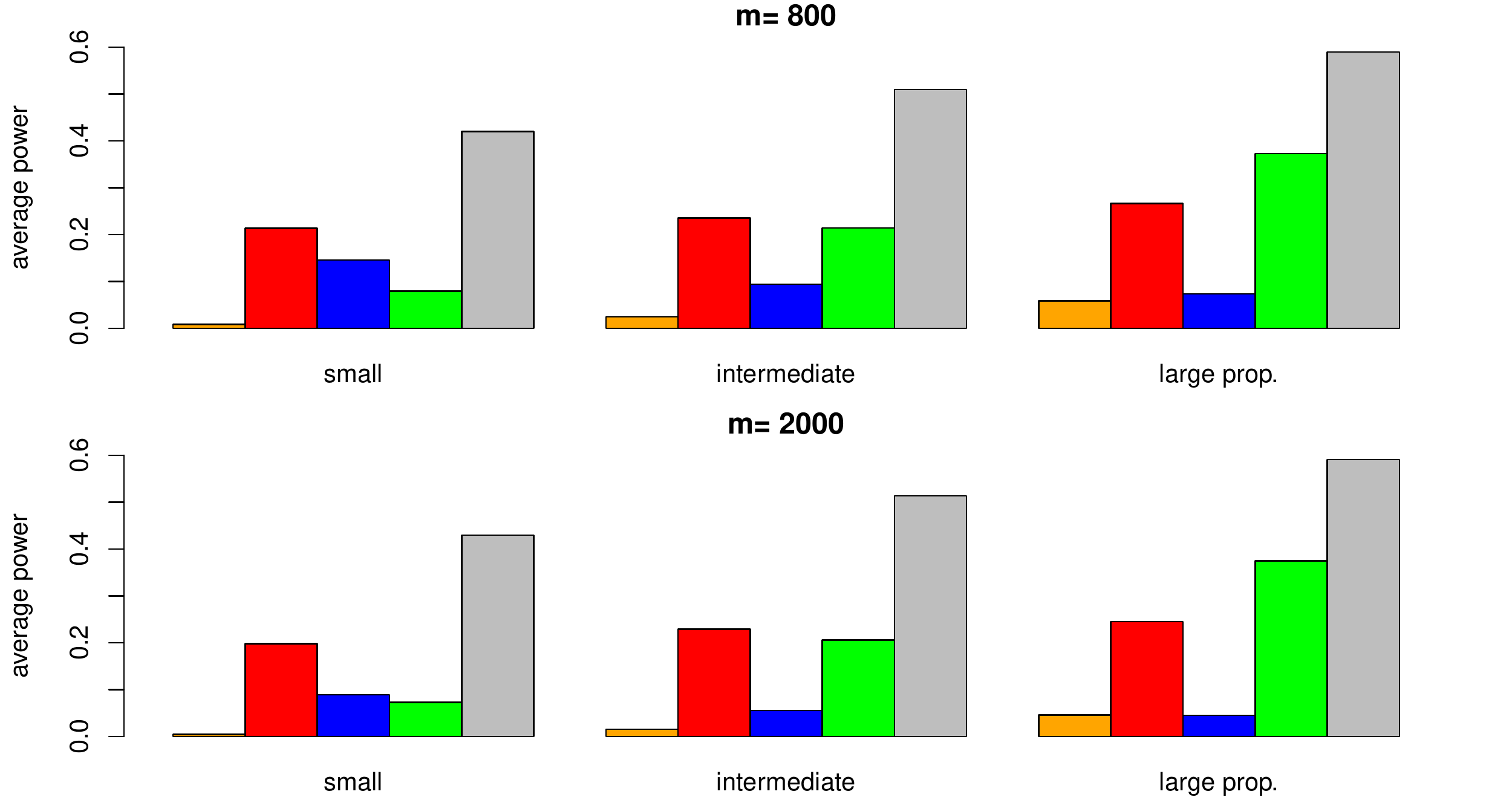}
	\caption{Average power (high degree of discreteness) of the BY-procedure (orange), DBY-procedure (red), DBL-procedure (blue), BH-procedure (green) and DBH (grey) for small, intermediate and large proportions of alternatives with strong effects. The top panel presents results for $m=800$, the lower panel for $m=2000$.}
	\label{fig:FDPvsModifiedFDPAverageProportionRejections_25}
\end{figure} 
Figure \ref{fig:FDPvsModifiedFDPAverageProportionRejections_25} shows that the discrete Benjamini-Yekutieli procedure performs much better than its continuous counterpart. As expected, the discrete Benjamini-Hochberg procedure performs best amongst all procedures. For small and intermediate proportions of large effects, the discrete Benjamini-Yekutieli outperforms the continuous Benjamini-Hochberg procedure, which may be surprising given that the standard Benjamini-Yekutieli procedure has a reputation for being extremely conservative. This shows that by incorporating discreteness,  even the Benjamini-Yekutieli procedure may become a powerful tool in some settings. This observation is similar to the findings of \citet{WestWolf1997} with respect to the Bonferroni procedure.

\subsubsection{Moderate degree of discreteness}
Table \ref{tab:Tab100} in the Appendix displays the (average) power of the seven procedures under investigation, for $N=100$.
For weak effects, again none of the procedure possesses any relevant power. For intermediate and strong effects the situation is different from the highly discrete case. For strong effects, all procedures exhibit roughly the same, extremely high, power. For moderate effects, figure \ref{fig:FDPvsModifiedFDPAverageProportionRejections_100} shows that while the discrete Benjamini-Yekutieli procedure still improves on its continuous counterpart, the gains are now much smaller than in the sparse case.  A similar conclusion holds true for the DBH procedure. 
\begin{figure}[htbp]
	\centering
		\includegraphics[width=1.0\textwidth]{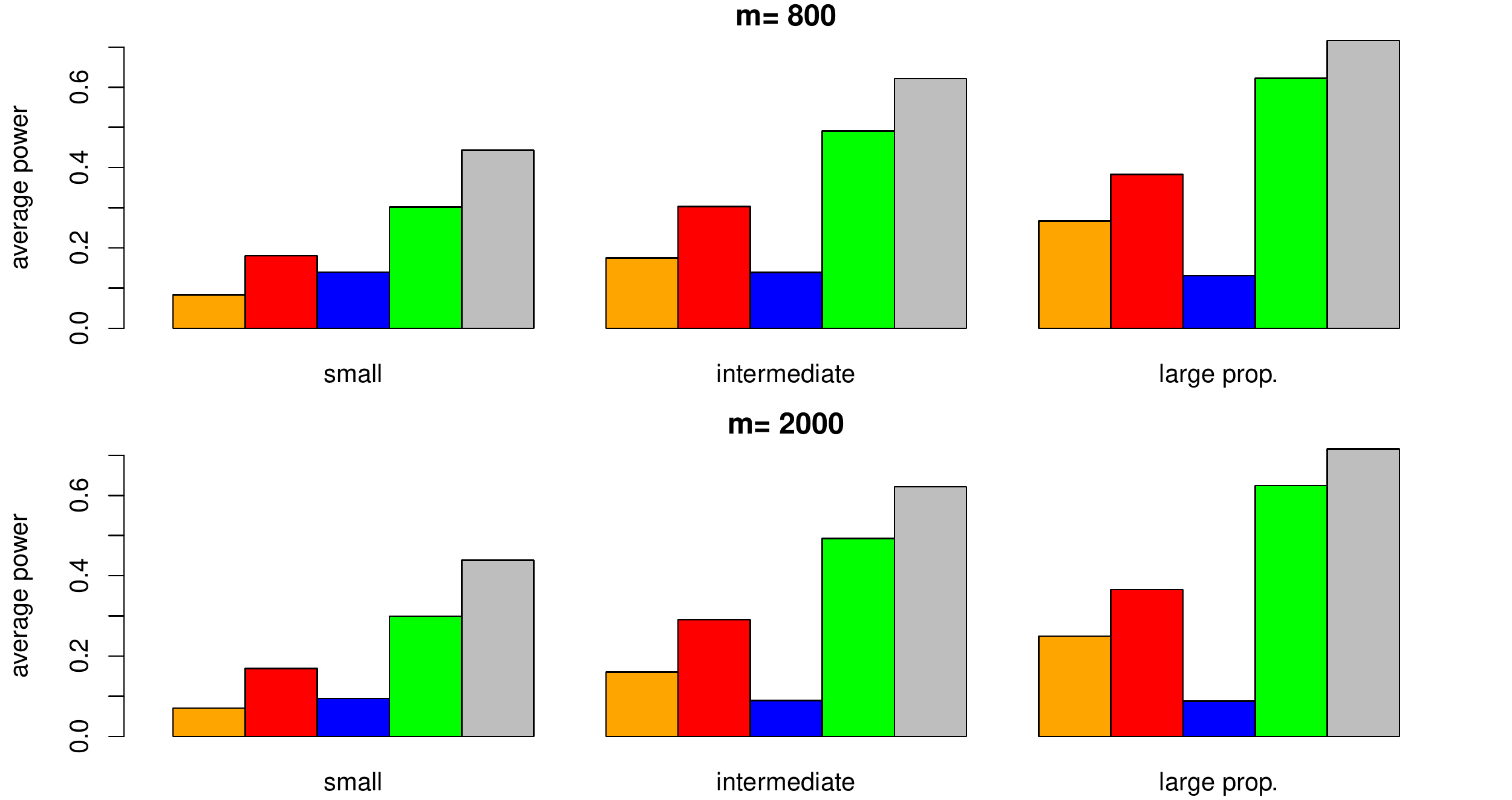}
	\caption{Average power (moderate degree of discreteness) of the BY-procedure (orange), DBY-procedure (red), DBL-procedure (blue), BH-procedure (green) and DBH (grey) for small, intermediate and large proportions of alternatives with intermediate effects. The top panel presents results for $m=800$, the lower panel for $m=2000$.}
	\label{fig:FDPvsModifiedFDPAverageProportionRejections_100}
\end{figure}

\section{Discussion}\label{sec:Discussion}
In this paper we have introduced a discrete modification of the Benjamini-Yekutieli procedure and investigated its performance for an empirical data set and  in a simulation study. We have shown that this procedure can perform remarkably well when the number of tests is large and many variables are sparse. Thus we recommend this procedure for discrete data with arbitrary dependency structure.

The proof of proposition \ref{prop:MainProp} shows that due to \eqref{eq:SarkarBound1} the FDR is actually controlled by
\begin{align*}
\FDR(c) & \le  \sum_{r=1}^n  \frac{1}{r} (G_I(c_r)-G_I(c_{r-1})), 
\end{align*}
where $G_I(x)=\sum_{j \in I} F_j(x)$ and $I \subset \{1, \ldots,n \}$ denotes the set of true hypotheses. Since $n_0=|I|$ is unknown it might be possible to improve our procedures by incorporating an appropriately chosen estimate of $n_0$. Such adaptive procedures have been proposed for the BH procedure by \citet{BenjaminiHochberg00}, \citet{BenjaminiKriegerYekutieli06} and \citet{Storey2004} and others under independence. \citet{Sarkar2008b} investigates such two-stage procedures also for the general dependence case and obtains modifications of the BY procedure. It would be intersting to see if the procedures introduced in this paper can be modified in a similar way and if this leads to more powerful procedures.

The FDR is the expected value of the false discovery proportion $\FDP=V/R$, where $R$ denotes the number of rejected hypotheses and $V$ the number of falsely rejected hypotheses. \citet{GuoHeSarkar2012} generalize the notion of $\FDP$ to the situation where a small number $k$ of false rejections is deemed acceptable. More specifically, they introduce $\kFDP=V/R$ if $V \ge k$ and $0$ otherwise, and the $\kFDR$ is defined as $\erw(\kFDP)$. It can be shown that similar representations to our proposition \ref{prop:MainProp} are available for the $\kFDR$. However, the upper bound in the proposition now involves determining ${n \choose k}$ distribution functions and so this approach seems computationally feasible only for special situations.

Finally, we mention that for independent $p$-values it still remains a challenge to develop a discrete modification of the Benjamini-Hochberg procedure with guaranteed FDR control.

\section*{Acknowledgments} 
The author would like to thank Irene Castro-Conde and Ruth Heller for helpful discussions and Florian Junge for greatly improving the simulation code.

\bibliographystyle{chicago}
\label{bibliography}
\bibliography{Vorlage}
\addresseshere
\appendix
\newpage
\section*{Appendix 1}
In this appendix we present a numerical example for which the procedure of \citet{Heyse2011} is anticonservative. \HG already constructed a similar example, using the definition \eqref{eq:Def:Adjusted:pvalues} of adjusted p-values. However, as one referee pointed out, this definition differs from the one originally given in \citet{Heyse2011}, which we recap here (in our notation):
\begin{equation}
\begin{aligned} 
\widetilde{pv}_{(n)} &=pv_{(n)}, \qquad \text{and}\\ \label{tab:Heyse:procedure}
\widetilde{pv}_{(i)} &= \min \{\widetilde{pv}_{(i+1)}, G(pv_{(i)})/i\},
\end{aligned}
\end{equation}
for $i=1, \ldots,n-1$. Since $G(pv_{(n)})/n \le pv_{(n)}$, definition \eqref{tab:Heyse:procedure} may yield larger adjusted p-values than definition \eqref{eq:Def:Adjusted:pvalues}. In fact, as the referee pointed out, when the example of \HG is analysed using \eqref{tab:Heyse:procedure} instead of \eqref{eq:Def:Adjusted:pvalues}, the $\FDR$ level is no longer larger than the nominal level.

\subsection*{Numerical example}In what follows we present a numerical example where even the more conservative procedure given by definition \eqref{tab:Heyse:procedure} is anticonservative. Consider $n=4$ $p$-value distributions (under the null hypotheses) defined by
\begin{align*}
P_1 &= 0.05 \cdot \delta_{\{0.05\}} + 0.95 \cdot \delta_{\{1\}},\\
P_2 &= 0.025 \cdot \delta_{\{0.10\}} + 0.975 \cdot \delta_{\{1\}},\\
P_3 &= 0.025 \cdot \delta_{\{0.15\}} + 0.975 \cdot \delta_{\{1\}}\qquad \text{and}\\
P_{4} &= \delta_{\{1\}}.
\end{align*}
where $\delta_{\{x\}}$ denotes the Dirac measure at $x$. Clearly, these distributions are stochastically larger than the uniform distribution and the  function $G=F_1 + F_2+F_3 +F_{4}$ can easily be determined. 
Now consider the configuration where all hypotheses are true and use the discrete FDR procedure to control the $\FDR=\FWER$ at level $\alpha=0.05$ (since $PV_{(4)}=1$  definitions \eqref{eq:Def:Adjusted:pvalues} and \eqref{tab:Heyse:procedure} coincide). This means that all hypotheses with $\widetilde{pv}_{(j)}\le \alpha$ are declared significant. Table \ref{tab:CounterExampleHeyse} illustrates the situation in detail. All possible combinations of $p$-values are listed in the first three columns ($pv_4 =1 $ is omitted). The sorted values are shown in columns 3 to 6, the adjusted $p$-values are given in columns 7 to 9. Columns 10 to 12 list the probability masses for the $p$-values in the first 3 columns. Since we assume independence, the joint probability of observing $(pv_1, pv_2, pv_3)$ is given by the product of the probabilities (last column) and the joint probabilities for those constellations of $p$-values for which at least one hypothtesis is rejected are printed in boldface. Thus the probability of at least one rejection can be computed exactly and we obtain $\FWER=\FDR=0.05059375$.

In principle this type of example can be extended easily to larger $n$, however since $2^{n-1}$ events have to be enumerated we soon run into computational difficulties. As a slight extension of the example above consider $n=10$. Assume $P_1$ as above, $P_{10}= \delta_{\{1\}}$ and  $P_2,\ldots, P_9$ each with probability mass  $0.00621$ at locations $0.10, \ldots , 0.45$. In this case (the details are not presented) similar calculations as above yield  $\FWER=\FDR=0.05100062$.

\begin{sidewaystable}[htbp]
  \centering
  \caption{Details of the example}
    \begin{tabular}{ccccccccccccc}
    \toprule
    $pv_1$     & $pv_2$     & $pv_3$     & $pv_{(1)}$ & $pv_{(2)}$ & $pv_{(3)}$ & $\widetilde{pv}_{(1)}$ & $\widetilde{pv}_{(2)}$ & $\widetilde{pv}_{(3)}$ & $P(PV_1=pv_1)$ & $P(PV_2=pv_2)$ & $P(PV_3=pv_3)$ & joint prob.  \\
    \midrule
		0.0500 & 0.1000 & 0.1500 & 0.0500 & 0.1000 & 0.1500 & 0.0333 & 0.0333 & 0.0333 & 0.0500 & 0.0250 & 0.0250 & \textbf{0.00003125} \\
    1.0000 & 0.1000 & 0.1500 & 0.1000 & 0.1500 & 1.0000 & 0.0500 & 0.0500 & 1.0000 & 0.9500 & 0.0250 & 0.0250 & \textbf{0.00059375} \\
    0.0500 & 1.0000 & 0.1500 & 0.0500 & 0.1500 & 1.0000 & 0.0500 & 0.0500 & 1.0000 & 0.0500 & 0.9750 & 0.0250 & \textbf{0.00121875} \\
    1.0000 & 1.0000 & 0.1500 & 0.1500 & 1.0000 & 1.0000 & 0.1000 & 1.0000 & 1.0000 & 0.9500 & 0.9750 & 0.0250 & 0.02315625 \\
    0.0500 & 0.1000 & 1.0000 & 0.0500 & 0.1000 & 1.0000 & 0.0375 & 0.0375 & 1.0000 & 0.0500 & 0.0250 & 0.9750 & \textbf{0.00121875} \\
    1.0000 & 0.1000 & 1.0000 & 0.1000 & 1.0000 & 1.0000 & 0.0750 & 1.0000 & 1.0000 & 0.9500 & 0.0250 & 0.9750 & 0.02315625 \\
    0.0500 & 1.0000 & 1.0000 & 0.0500 & 1.0000 & 1.0000 & 0.0500 & 1.0000 & 1.0000 & 0.0500 & 0.9750 & 0.9750 & \textbf{0.04753125} \\
    1.0000 & 1.0000 & 1.0000 & 1.0000 & 1.0000 & 1.0000 & 1.0000 & 1.0000 & 1.0000 & 0.9500 & 0.9750 & 0.9750 & 0.90309375 \\\hline
          &       &       &       &       &       &       &       &       &       &       &      FWER  &        \textbf{0.05059375}  \\
    \bottomrule
    \end{tabular}%
  \label{tab:CounterExampleHeyse}%
\end{sidewaystable}%

\subsection*{Mathematical considerations}The above example implies that the reasoning provided in \citet{Heyse2011} is inadequate for proving FDR control of this procedure. We now try to identify which part of this reasoning may be problematic. \citet{Heyse2011} argued that this procedure controls the $\FDR$ by the same argument used in \citet{Gilbert05}. The latter paper appealed to equation (19) of \citet{BenjaminiYekutieli01}, which we reproduce here (with our notation): For a given sequence $c_1 \le \cdots \le c_n$ of critical constants the $\FDR$ can be expressed by
\begin{align}
\FDR &=\sum_{j\in I} \sum_{r=1}^n \frac{1}{r} F_j (c_r) \cdot P(C_r^{(j)})
\end{align} 
where the event $C_r^{(j)}$ is defined on page 1178 of \citet{BenjaminiYekutieli01}. The crucial point is that the events $C_r^{(j)}$ and  probabilities $P(C_r^{(j)})$ are \emph{determined} (in a non-trivial way) by the given multiple testing procedure and the distribution of the $p$-values. In \citet{Gilbert05}, however, the values $P(C_r^{(j)})$ are  \emph{chosen} (arbitrarily) without further mathematical justification. The author seems to have been aware that this reasoning does not constitute a valid mathematical proof of FDR control since he states that this procedure 'usually' (p. 151) controls the FDR. Thus, while this procedure has some heuristic justification, its exact mathematical properties remain unclear. 

\subsection*{Discussion} We have shown that the DBH procedure of \citet{Heyse2011} does not generally guarantee FDR control at the desired level. We have also tried to identify the mathematical reason for this. It might be argued that the magnitude of the bias in the example(s) is rather small. However, it should be kept in mind that these examples show the behaviour for specific constellations and it is not by any means clear that these are 'worst case scenarios' (which is what type 1 error control is all about). With more mathematical and computational effort it may also be possible to find more severe examples of FDR inflation (e.g. it is unclear what may happen when $m$ is large, some hypotheses are false, etc). Thus our example is not meant  to quantify the magnitude of bias, but rather to demonstrate that the DBH procedure does not rigorously control FDR. To the best of our knowledge, no (valid) upper FDR bound is available for the DBH procedure. Thus it still remains an open problem to find a discrete modification of the BH procedure that rigorously controls the FDR.

\clearpage

\section*{Appendix 2}

\begin{table}[htbp]
  \centering
  \caption{Average power of FDR procedures for $N=25$}
    \begin{tabularx}{1.1\textwidth}{cccc||ccccccc}\toprule		
    $m$     & $m_3$    & $m_1$    & $q_3$    & DBH & BH    & DBL   & DBY   & BY    & DSarkar & Sarkar \\ 
		\midrule
		800   & 80    & 144   & 0.15  & 0.0002 & 0.0000 & 0.0004 & 0.0002 & 0.0000 & 0.0000 & 0.0000 \\
          &       & 144   & 0.25  & 0.0157 & 0.0003 & 0.0084 & 0.0080 & 0.0000 & 0.0001 & 0.0000 \\
          &       & 144   & 0.4   & 0.4195 & 0.0795 & 0.1457 & 0.2131 & 0.0086 & 0.0100 & 0.0001 \\
          &       & 360   & 0.15  & 0.0005 & 0.0000 & 0.0007 & 0.0003 & 0.0000 & 0.0000 & 0.0000 \\
          &       & 360   & 0.25  & 0.0188 & 0.0003 & 0.0096 & 0.0087 & 0.0000 & 0.0001 & 0.0000 \\
          &       & 360   & 0.4   & 0.4511 & 0.0795 & 0.1466 & 0.2232 & 0.0086 & 0.0101 & 0.0001 \\
          &       & 576   & 0.15  & 0.0006 & 0.0000 & 0.0007 & 0.0006 & 0.0000 & 0.0000 & 0.0000 \\
          &       & 576   & 0.25  & 0.0233 & 0.0003 & 0.0152 & 0.0097 & 0.0000 & 0.0001 & 0.0000 \\
          &       & 576   & 0.4   & 0.4789 & 0.0794 & 0.1551 & 0.2416 & 0.0086 & 0.0103 & 0.0001 \\
          & 240   & 112   & 0.15  & 0.0002 & 0.0000 & 0.0002 & 0.0002 & 0.0000 & 0.0000 & 0.0000 \\
          &       & 112   & 0.25  & 0.0219 & 0.0005 & 0.0081 & 0.0062 & 0.0000 & 0.0000 & 0.0000 \\
          &       & 112   & 0.4   & 0.5095 & 0.2142 & 0.0942 & 0.2350 & 0.0241 & 0.0212 & 0.0001 \\
          &       & 280   & 0.15  & 0.0002 & 0.0000 & 0.0002 & 0.0002 & 0.0000 & 0.0000 & 0.0000 \\
          &       & 280   & 0.25  & 0.0242 & 0.0005 & 0.0082 & 0.0064 & 0.0000 & 0.0000 & 0.0000 \\
          &       & 280   & 0.4   & 0.5528 & 0.2141 & 0.0955 & 0.2494 & 0.0241 & 0.0217 & 0.0001 \\
          &       & 448   & 0.15  & 0.0003 & 0.0000 & 0.0005 & 0.0002 & 0.0000 & 0.0000 & 0.0000 \\
          &       & 448   & 0.25  & 0.0279 & 0.0005 & 0.0082 & 0.0067 & 0.0000 & 0.0000 & 0.0000 \\
          &       & 448   & 0.4   & 0.5875 & 0.2140 & 0.0961 & 0.2650 & 0.0241 & 0.0223 & 0.0001 \\
          & 480   & 64    & 0.15  & 0.0002 & 0.0000 & 0.0002 & 0.0002 & 0.0000 & 0.0000 & 0.0000 \\
          &       & 64    & 0.25  & 0.0279 & 0.0007 & 0.0031 & 0.0053 & 0.0000 & 0.0000 & 0.0000 \\
          &       & 64    & 0.4   & 0.5895 & 0.3727 & 0.0732 & 0.2664 & 0.0586 & 0.1181 & 0.0001 \\
          &       & 160   & 0.15  & 0.0002 & 0.0000 & 0.0002 & 0.0002 & 0.0000 & 0.0000 & 0.0000 \\
          &       & 160   & 0.25  & 0.0290 & 0.0007 & 0.0031 & 0.0054 & 0.0000 & 0.0000 & 0.0000 \\
          &       & 160   & 0.4   & 0.5903 & 0.3725 & 0.0732 & 0.2687 & 0.0586 & 0.1221 & 0.0001 \\
          &       & 256   & 0.15  & 0.0002 & 0.0000 & 0.0002 & 0.0002 & 0.0000 & 0.0000 & 0.0000 \\
          &       & 256   & 0.25  & 0.0306 & 0.0007 & 0.0032 & 0.0055 & 0.0000 & 0.0000 & 0.0000 \\
          &       & 256   & 0.4   & 0.6064 & 0.3722 & 0.0733 & 0.2697 & 0.0586 & 0.1267 & 0.0001 \\
    2000  & 200   & 360   & 0.15  & 0.0002 & 0.0000 & 0.0002 & 0.0001 & 0.0000 & 0.0000 & 0.0000 \\
          &       & 360   & 0.25  & 0.0127 & 0.0001 & 0.0075 & 0.0048 & 0.0000 & 0.0000 & 0.0000 \\
          &       & 360   & 0.4   & 0.4293 & 0.0726 & 0.0887 & 0.1981 & 0.0046 & 0.0036 & 0.0000 \\
          &       & 900   & 0.15  & 0.0002 & 0.0000 & 0.0002 & 0.0002 & 0.0000 & 0.0000 & 0.0000 \\
          &       & 900   & 0.25  & 0.0149 & 0.0001 & 0.0081 & 0.0052 & 0.0000 & 0.0000 & 0.0000 \\
          &       & 900   & 0.4   & 0.4520 & 0.0725 & 0.0949 & 0.2197 & 0.0046 & 0.0036 & 0.0000 \\
          &       & 1440  & 0.15  & 0.0002 & 0.0000 & 0.0003 & 0.0002 & 0.0000 & 0.0000 & 0.0000 \\
          &       & 1440  & 0.25  & 0.0188 & 0.0001 & 0.0082 & 0.0056 & 0.0000 & 0.0000 & 0.0000 \\
          &       & 1440  & 0.4   & 0.4700 & 0.0725 & 0.0975 & 0.2291 & 0.0046 & 0.0036 & 0.0000 \\
          & 600   & 280   & 0.15  & 0.0002 & 0.0000 & 0.0002 & 0.0001 & 0.0000 & 0.0000 & 0.0000 \\
          &       & 280   & 0.25  & 0.0186 & 0.0001 & 0.0030 & 0.0042 & 0.0000 & 0.0000 & 0.0000 \\
          &       & 280   & 0.4   & 0.5135 & 0.2056 & 0.0555 & 0.2287 & 0.0155 & 0.0072 & 0.0000 \\
          &       & 700   & 0.15  & 0.0002 & 0.0000 & 0.0002 & 0.0001 & 0.0000 & 0.0000 & 0.0000 \\
          &       & 700   & 0.25  & 0.0214 & 0.0001 & 0.0030 & 0.0043 & 0.0000 & 0.0000 & 0.0000 \\
          &       & 700   & 0.4   & 0.5539 & 0.2055 & 0.0556 & 0.2302 & 0.0155 & 0.0073 & 0.0000 \\
          &       & 1120  & 0.15  & 0.0002 & 0.0000 & 0.0002 & 0.0001 & 0.0000 & 0.0000 & 0.0000 \\
          &       & 1120  & 0.25  & 0.0265 & 0.0001 & 0.0030 & 0.0045 & 0.0000 & 0.0000 & 0.0000 \\
          &       & 1120  & 0.4   & 0.5897 & 0.2055 & 0.0558 & 0.2497 & 0.0155 & 0.0075 & 0.0000 \\
          & 1200  & 160   & 0.15  & 0.0001 & 0.0000 & 0.0001 & 0.0001 & 0.0000 & 0.0000 & 0.0000 \\
          &       & 160   & 0.25  & 0.0270 & 0.0002 & 0.0030 & 0.0040 & 0.0000 & 0.0000 & 0.0000 \\
          &       & 160   & 0.4   & 0.5907 & 0.3749 & 0.0451 & 0.2449 & 0.0453 & 0.1026 & 0.0000 \\
          &       & 400   & 0.15  & 0.0001 & 0.0000 & 0.0001 & 0.0001 & 0.0000 & 0.0000 & 0.0000 \\
          &       & 400   & 0.25  & 0.0288 & 0.0002 & 0.0030 & 0.0041 & 0.0000 & 0.0000 & 0.0000 \\
          &       & 400   & 0.4   & 0.5908 & 0.3745 & 0.0451 & 0.2573 & 0.0453 & 0.1067 & 0.0000 \\
          &       & 640   & 0.15  & 0.0001 & 0.0000 & 0.0002 & 0.0001 & 0.0000 & 0.0000 & 0.0000 \\
          &       & 640   & 0.25  & 0.0309 & 0.0002 & 0.0030 & 0.0041 & 0.0000 & 0.0000 & 0.0000 \\
          &       & 640   & 0.4   & 0.6059 & 0.3742 & 0.0451 & 0.2662 & 0.0453 & 0.1118 & 0.0000 \\
    \bottomrule
    \end{tabularx}%
  \label{tab:Tab25}%
\end{table}%

\newpage
\begin{table}[htbp]
  \centering
  \caption{Average power of FDR procedures for $N=100$}
    \begin{tabularx}{1.1\textwidth}{cccc||ccccccc}
    \toprule
    $m$     &$ m_3$    & $m_1$    & $q_3$    & DBH & BH    & DBL   & DBY   & BY    & DSarkar & Sarkar \\
    \midrule
		800   & 80    & 144   & 0.15  & 0.0022 & 0.0010 & 0.0019 & 0.0005 & 0.0002 & 0.0000 & 0.0000 \\
          &       & 144   & 0.25  & 0.4429 & 0.3013 & 0.1396 & 0.1806 & 0.0829 & 0.0144 & 0.0033 \\
          &       & 144   & 0.4   & 0.9931 & 0.9848 & 0.8955 & 0.9646 & 0.9396 & 0.9413 & 0.8897 \\
          &       & 360   & 0.15  & 0.0027 & 0.0010 & 0.0028 & 0.0006 & 0.0002 & 0.0000 & 0.0000 \\
          &       & 360   & 0.25  & 0.5114 & 0.3005 & 0.1656 & 0.2205 & 0.0828 & 0.0191 & 0.0033 \\
          &       & 360   & 0.4   & 0.9950 & 0.9847 & 0.9204 & 0.9748 & 0.9396 & 0.9460 & 0.8896 \\
          &       & 576   & 0.15  & 0.0049 & 0.0010 & 0.0043 & 0.0010 & 0.0002 & 0.0000 & 0.0000 \\
          &       & 576   & 0.25  & 0.6095 & 0.2996 & 0.2086 & 0.3085 & 0.0828 & 0.0344 & 0.0033 \\
          &       & 576   & 0.4   & 0.9973 & 0.9847 & 0.9451 & 0.9842 & 0.9396 & 0.9652 & 0.8896 \\
          & 240   & 112   & 0.15  & 0.0026 & 0.0011 & 0.0018 & 0.0004 & 0.0002 & 0.0000 & 0.0000 \\
          &       & 112   & 0.25  & 0.6213 & 0.4915 & 0.1391 & 0.3030 & 0.1751 & 0.0776 & 0.0094 \\
          &       & 112   & 0.4   & 0.9975 & 0.9945 & 0.9203 & 0.9840 & 0.9690 & 0.9847 & 0.9712 \\
          &       & 280   & 0.15  & 0.0035 & 0.0011 & 0.0022 & 0.0005 & 0.0002 & 0.0000 & 0.0000 \\
          &       & 280   & 0.25  & 0.6593 & 0.4907 & 0.1458 & 0.3429 & 0.1750 & 0.1131 & 0.0094 \\
          &       & 280   & 0.4   & 0.9980 & 0.9944 & 0.9384 & 0.9858 & 0.9690 & 0.9872 & 0.9712 \\
          &       & 448   & 0.15  & 0.0057 & 0.0011 & 0.0029 & 0.0007 & 0.0002 & 0.0000 & 0.0000 \\
          &       & 448   & 0.25  & 0.7133 & 0.4897 & 0.1773 & 0.3933 & 0.1749 & 0.1776 & 0.0094 \\
          &       & 448   & 0.4   & 0.9988 & 0.9944 & 0.9621 & 0.9901 & 0.9690 & 0.9908 & 0.9712 \\
          & 480   & 64    & 0.15  & 0.0038 & 0.0013 & 0.0016 & 0.0004 & 0.0002 & 0.0000 & 0.0000 \\
          &       & 64    & 0.25  & 0.7163 & 0.6227 & 0.1308 & 0.3832 & 0.2672 & 0.3458 & 0.1111 \\
          &       & 64    & 0.4   & 0.9988 & 0.9976 & 0.9488 & 0.9898 & 0.9825 & 0.9951 & 0.9910 \\
          &       & 160   & 0.15  & 0.0044 & 0.0013 & 0.0018 & 0.0005 & 0.0002 & 0.0000 & 0.0000 \\
          &       & 160   & 0.25  & 0.7312 & 0.6221 & 0.1387 & 0.4114 & 0.2672 & 0.3790 & 0.1110 \\
          &       & 160   & 0.4   & 0.9989 & 0.9976 & 0.9643 & 0.9909 & 0.9825 & 0.9957 & 0.9910 \\
          &       & 256   & 0.15  & 0.0057 & 0.0013 & 0.0019 & 0.0006 & 0.0002 & 0.0000 & 0.0000 \\
          &       & 256   & 0.25  & 0.7539 & 0.6215 & 0.1448 & 0.4285 & 0.2671 & 0.4179 & 0.1110 \\
          &       & 256   & 0.4   & 0.9991 & 0.9976 & 0.9794 & 0.9916 & 0.9825 & 0.9963 & 0.9910 \\
    2000  & 200   & 360   & 0.15  & 0.0011 & 0.0003 & 0.0009 & 0.0002 & 0.0000 & 0.0000 & 0.0000 \\
          &       & 360   & 0.25  & 0.4390 & 0.2994 & 0.0948 & 0.1688 & 0.0707 & 0.0063 & 0.0009 \\
          &       & 360   & 0.4   & 0.9930 & 0.9848 & 0.8520 & 0.9646 & 0.9336 & 0.9409 & 0.8886 \\
          &       & 900   & 0.15  & 0.0015 & 0.0003 & 0.0013 & 0.0003 & 0.0000 & 0.0000 & 0.0000 \\
          &       & 900   & 0.25  & 0.5116 & 0.2986 & 0.1088 & 0.2066 & 0.0706 & 0.0083 & 0.0009 \\
          &       & 900   & 0.4   & 0.9949 & 0.9848 & 0.8793 & 0.9749 & 0.9336 & 0.9454 & 0.8886 \\
          &       & 1440  & 0.15  & 0.0030 & 0.0003 & 0.0020 & 0.0005 & 0.0000 & 0.0000 & 0.0000 \\
          &       & 1440  & 0.25  & 0.6091 & 0.2977 & 0.1433 & 0.2926 & 0.0706 & 0.0171 & 0.0009 \\
          &       & 1440  & 0.4   & 0.9973 & 0.9847 & 0.9208 & 0.9830 & 0.9336 & 0.9648 & 0.8886 \\
          & 600   & 280   & 0.15  & 0.0015 & 0.0004 & 0.0009 & 0.0002 & 0.0001 & 0.0000 & 0.0000 \\
          &       & 280   & 0.25  & 0.6212 & 0.4928 & 0.0895 & 0.2902 & 0.1604 & 0.0601 & 0.0028 \\
          &       & 280   & 0.4   & 0.9976 & 0.9946 & 0.8775 & 0.9828 & 0.9680 & 0.9848 & 0.9716 \\
          &       & 700   & 0.15  & 0.0020 & 0.0004 & 0.0010 & 0.0002 & 0.0001 & 0.0000 & 0.0000 \\
          &       & 700   & 0.25  & 0.6564 & 0.4920 & 0.0988 & 0.3198 & 0.1603 & 0.0970 & 0.0028 \\
          &       & 700   & 0.4   & 0.9981 & 0.9945 & 0.8954 & 0.9855 & 0.9680 & 0.9872 & 0.9716 \\
          &       & 1120  & 0.15  & 0.0034 & 0.0004 & 0.0015 & 0.0003 & 0.0001 & 0.0000 & 0.0000 \\
          &       & 1120  & 0.25  & 0.7125 & 0.4910 & 0.1189 & 0.3688 & 0.1603 & 0.1672 & 0.0028 \\
          &       & 1120  & 0.4   & 0.9988 & 0.9945 & 0.9270 & 0.9886 & 0.9680 & 0.9909 & 0.9716 \\
          & 1200  & 160   & 0.15  & 0.0024 & 0.0006 & 0.0008 & 0.0002 & 0.0001 & 0.0000 & 0.0000 \\
          &       & 160   & 0.25  & 0.7151 & 0.6244 & 0.0882 & 0.3654 & 0.2494 & 0.3449 & 0.0958 \\
          &       & 160   & 0.4   & 0.9988 & 0.9976 & 0.9204 & 0.9885 & 0.9803 & 0.9951 & 0.9910 \\
          &       & 400   & 0.15  & 0.0029 & 0.0006 & 0.0009 & 0.0002 & 0.0001 & 0.0000 & 0.0000 \\
          &       & 400   & 0.25  & 0.7294 & 0.6238 & 0.0914 & 0.3860 & 0.2494 & 0.3776 & 0.0957 \\
          &       & 400   & 0.4   & 0.9989 & 0.9976 & 0.9382 & 0.9898 & 0.9803 & 0.9957 & 0.9910 \\
          &       & 640   & 0.15  & 0.0037 & 0.0006 & 0.0009 & 0.0003 & 0.0001 & 0.0000 & 0.0000 \\
          &       & 640   & 0.25  & 0.7531 & 0.6232 & 0.0976 & 0.4166 & 0.2493 & 0.4197 & 0.0956 \\
          &       & 640   & 0.4   & 0.9990 & 0.9976 & 0.9615 & 0.9910 & 0.9803 & 0.9963 & 0.9910 \\
    \bottomrule
    \end{tabularx}%
  \label{tab:Tab100}%
\end{table}%

\end{document}